\newtheorem{theorem}{Theorem}[section]
\newtheorem{lemma}[theorem]{Lemma}
\theoremstyle{definition}
\newtheorem{example}[theorem]{Example}
\newtheorem{corollary}[theorem]{Corollary}
\theoremstyle{remark}
\newtheorem{remark}[theorem]{Remark}
\numberwithin{equation}{section}
\begin{document}

\title{Prescribed curvature tensor in locally conformally flat manifolds}

\author{Romildo Pina}
\address{Instituto de Matem\'atica e Estat\'istica, Universidade Federal de Goi\'as, Goi\^ania, Brasil, 74001-970}
\curraddr{Insituto de Matem\'atica e Estat\'istica, Universidade Federal de Goi\'as, Goi\^ania, Brasil, 74001-970}
\email{romildo@ufg.br}
\thanks{The authors were supported in part by CAPES/PROCAD and FAPEG/GO}

\author{Mauricio Pieterzack}
\address{Instituto de Matem\'atica e Estat\'istica, Universidade Federal de Goi\'as, Goi\^ania, Brasil, 74001-970}
\email{mauriciopieterzack@gmail.com}

\subjclass[2010]{Primary 53C21; Secondary 35N10}

\date{August, 2014 and, in revised form, .}


\keywords{Conformal metric, riemannian curvature tensor, scalar curvature, ricci curvature}

\begin{abstract}
In the euclidean space $(\mathbb{R}^n,g)$, with $n \geq 3$, $g_{ij} = \delta_{ij}$, we consider a (0,4)-tensor $R= T \odot g$ where $T=\sum_i f_i (x) dx_i^2$ is a diagonal (0,2)-tensor. 
We obtain necessary and sufficient conditions for the existence 
of a metric  $\bar{g}$, conformal to $g$, such that ${\bar R}=R$, where ${\bar R}$ is the Riemannian curvature tensor of the metric $\bar g$.
The solution to this problem is given explicitly for the special cases of the tensor $R$, including a case where the metric $\bar g$ is complete on $\mathbb{R}^n$. Similar problems are considered for locally conformally flat manifolds.
As applications of these results we exhibit explicit examples of metrics $\bar g$, conformal to $g$ they are solutions for this problem.

\end{abstract}

\maketitle

\section{Introduction}
An interesting and very studied problem in geometry is the problem of prescribed scalar curvature.
When the scalar curvature is constant and $M$ is a compact manifold this is just the famous
Yamabe problem. For more details about the problem of prescribed scalar curvature see \cite{RefAu} and \cite{RefCC}. 
For noncompact manifolds, several authors have obtained
progress in this problem. For more details, see for example \cite{RefAu},\cite{RefJin}, \cite{RefMaOw}, \cite{RefCNi},
\cite{RefKW}, \cite{RefBK} and their references.

Another interesting problem that is very studied in present days is the prescribed Ricci curvature equation that can be proposed as follow:
\begin{center}
\begin{minipage}{1cm}
(P1)
\end{minipage}\hspace*{0.5cm}\begin{minipage}{10cm}
  Given a symetric (0,2)-tensor $T$, defined on a manifold $M^n$, $n\geq 3$,\linebreak does there exist
a Riemannian metric $g$ such that $Ric\, g=T$?
\end{minipage}
\end{center}

When $T$ is nonsingular, that is, its determinant does not vanish,
a local solution of the Ricci equation always exists, as it was shown by DeTurck in \cite{RefDT1}.
When $T$ is singular, but still has constant rank and satisfies certain
appropriate conditions, then the Ricci equation also admits local
solutions (see \cite{RefDT2}). Rotationally symmetric nonsingular tensors were
considered in \cite{RefCao} and other results can be found in \cite{RefDT2}, \cite{RefDT3}, \cite{RefDT4},  \cite{RefH}, \cite{RefL} and \cite{RefAu}.
Recently, A. Pulemotov in \cite{RefP} have considered the prescribed Ricci curvature problem in a solid torus.

Pina and Tenenblat have obtained results for this problem
considering special classes of tensors T and conformal metrics (see \cite{RefPT1}, \cite{RefPT2} and their references).

Another problem related with the problem (P1) is {\it Prescribed Curvature Tensor problem}, that can be proposed as the following:
\begin{center}
\begin{minipage}{1cm}
(P2)
\end{minipage}\hspace*{0.5cm}\begin{minipage}{10cm}
 Given a (0,4)-tensor $R$, defined on a manifold $M^n$, $n\geq 3$, does there exists
a Riemannian metric $g$ such that $R_{g}=R$, where $R_g$ is the Riemannian curvature 
tensor of the metric $g$?
\end{minipage}
\end{center}
We observe that solving the problem (P2) is equivalent to solving a nonlinear system of
partial differential equations of second order. 

There are very few studies in the math literature that deal with this problem.
In 1986, DeTurck and Yang in \cite{RefDY} have considered the following problem: Let $R$ be a smooth nondegenerate (0,4)-tensor over a 3-manifold $M$. Then for any $x \in M$ there exists a smooth metric 
$g$ such that $R_g = R$ in a neighborhood of $x$, where $R_g$ is the Riemannian curvature tensor 
of the metric $g$.
They also obtain the local existence of metrics such that the Ricci curvature $Ric$ and
the scalar curvature $K$ satisfy $Ric_{g} + \lambda K g = Q$ for given $\lambda$ and $Q$ is a symmetric (0,2)-tensor.
In 1994, Kowalski and Belger in \cite{RefKB} have considered in $V$ be a $n$-dimensional real vector space,
equipped with a scalar product and tensors $K^{(l)}$ over $V$ satisfying all the identities for a Riemannian curvature tensor which satisfies certains inequalities, and the authors have proved that 
in this case there always exists an analytic Riemannian metric on one open ball such that the
 derivatives of its curvature tensor are the tensors $K^{(l)}$.

In an attempt to resolve it, we can consider the {\it Prescribed Curvature Tensor problem}
with change conformal of metric.
More precisely, we can consider the following problem:
\begin{center}
\begin{minipage}{1cm}
(P3)
\end{minipage}\hspace*{0.5cm}\begin{minipage}{10cm}
 Let $(M^n, g)$, $n\geq 3$, a Riemannian manifold. Given a (0,4)-tensor $R$, defined on a manifold $M$, does there exist a Riemannian metric $\bar{g}$ conformal to $g$ such that
$\bar{R}_{\bar g}=R$, where $\bar{R}_{\bar g}$ is the Riemannian curvature tensor of the metric $\bar{g}$?
\end{minipage}
\end{center}

The curvature tensor of the metric $g$ on $M$ can be decomposed in
$$
R_g = W_g + A_g \odot g,
$$
where $R_g$ is the Riemannian curvature tensor, $\odot$ is the Kulkarni-Nomizu product, $A_g$ and $W_g$ are the
Schouten and the Weyl tensors of $g$, respectivelly (see \cite{RefBE}).

Since the Weyl tensor is conformally invariant and especially in the case that $g$ is locally conformally flat $W_g=0$, if $g$ is locally conformally flat the Riemann curvature tensor is determined by the Schouten tensor and your decomposition is
\begin{equation}
 R_g = A_g \odot g
\label{eqtcurvature}.
\end{equation}

In this paper, we consider on $(\mathbb{R}^n, g)$, $n\geq 3$, the euclidean space, a (0,4)-tensor $R=T \odot g$, where $T$  is a diagonal (0,2)-tensor
given by $T= \displaystyle\sum_{i}f_{i}(x) dx^2_i$, with $f_{i}(x)$ are smooth functions.
Our interest is to study the {\it Prescribed Curvature Tensor problem} in this particular situation.

In Theorem \ref{theorem41} we provide necessary and sufficient conditions over the tensor $R$ for the existence of a metric $\bar{g}$ conformal to $g$ such that 
$\bar{R}_{\bar{g}}=R$. We also extend this result to locally conformally flat manifolds in
Theorem \ref{theorem46}. We consider particular cases for the tensor $R$ when the solutions for the
 {\it Prescribed Curvature Tensor problem} are given explicitilly. In Theorem \ref{theorem42} we consider the case in which $R = f(g\odot g)$, for a smooth function $f$ and $g$ is the euclidean metric. Unfortunately, in this case
the metric $\bar{g}$ is not complete. In Theorem \ref{theorem43} we obtain a result of nonexistence.
In Theorem \ref{theorem44} we consider tensors $T$ depending on only one fixed variable and from this result
we exhibit examples of complete metrics on $\mathbb{R}^n$ with prescribed Riemannian curvature tensor.

\section{Preliminaires}

Our principal goal is to study the {\it Prescribed Curvature Tensor problem} in locally conformally flat manifolds.
In this section, we set forth the notation and review the necessary background to state the results about this problem in the next section.

Let $(M^n,g)$, a flat manifold and $R = T \odot g$, where $T$ is a diagonal (0,2)-tensor, a (0,4)-tensor defined on $M$. 
We want to find $\bar{g} =\displaystyle\frac{1}{\varphi^2} g $ such that $\bar{R} = R$, where $\bar{R}$ is the Riemannian curvature tensor of the metric $\bar{g}$. That is, we want to study the problem

 \begin{equation}
\left\{ \begin{array}{l}
\bar{g} =\displaystyle\frac{1}{\varphi^2} g  \\
\bar{R} = R
\end{array}
\right.
\label{eqtensorcurvature1}
\end{equation}

Using the decomposition of the Riemannian curvature tensor $\bar{R}$ in (\ref{eqtcurvature}) we obtain that
$\bar{R}=R$ is equivalent to $A_{\bar{g}} \odot \bar{g} = T \odot g$. 
As $\bar{g} =\displaystyle\frac{1}{\varphi^2} g $ this equation is equivalent to 
\[
A_{\bar{g}} \odot \left(\displaystyle\frac{g}{\varphi^2}\right) = T \odot g.
\]

Thus, $\bar{R}=R$ is equivalent to
\[
A_{\bar{g}} \odot g = \left(\varphi^2 T\right) \odot g.
\]

Since the Nomizu-Kulkarni product is injective (see Lemma 1.113 in \cite{RefBE}), the problem
(\ref{eqtensorcurvature1}) is equivalent to
\begin{equation}
\left\{ \begin{array}{l}
\bar{g} =\displaystyle\frac{1}{\varphi^2} g  \\
A_{\bar{g}} = \varphi^2 T
\end{array}
\right.
\label{eqtensorcurvature21}
\end{equation}

From now we are considering the euclidean space $(\mathbb{R}^n, g)$, $n\geq 3$, with  coordinates $x=(x_1,..,x_n)$ and
$ g_{ij} = \delta_{ij}$. Given a (0,4)-tensor $R=T \odot g$, where $T$ is a diagonal (0,2)-tensor given by $T= \displaystyle\sum_{i}f_{i}(x) dx^2_i$,
with $f_{i}(x)$ smooth functions, we seek necessary and sufficient conditions
for the tensor $R=T \odot g$ to the existence of a metric  $\bar{g}= \displaystyle\frac{1}{\varphi^2}g$
such that ${\bar R}=R$.

The Schouten tensor of $\bar{g}$ is defined by
$$
A_{\bar{g}} = \displaystyle\frac{1}{n-2} \left(Ric_{\bar{g}} - \displaystyle\frac{\bar{K}}{2(n-1)}\bar{g} \right),
$$
where $Ric_{\bar{g}}$ and $\bar{K}$ are the Ricci tensor and the scalar curvature of $\bar{g}$, respectively.

As $\bar{g}$ is conformal to the euclidean metric $g$, the Ricci tensor of $\bar g$ is given by
\begin{equation}
\mbox{Ric}_{\bar{g}} = \displaystyle\frac{1}{\varphi^2}\left\{ (n - 2)\varphi Hess_g \varphi +
\left( \varphi \Delta_g \varphi - (n-1)|\nabla_g\varphi|^2\right)g \right \}
\label{eqric}
\end{equation}
\noindent and the scalar curvature of $\bar {g}$ is given by
\begin{equation}
\bar{K} =(n - 1)\left(2\varphi \Delta_g \varphi - n|\nabla_g\varphi|^2
\right)
\label{eqsca}
\end{equation}
\noindent  where $ \Delta_g $ and  $\nabla_g$ denote the laplacian and the gradient in the
euclidean metric $g$, respectivelly (see \cite{RefBE}).

Using the expressions (\ref{eqric}) and (\ref{eqsca}) the Schouten tensor of $\bar{g}$ can be expressed by
\begin{equation}
A_{\bar{g}} = \displaystyle\frac{Hess_g \varphi}{\varphi}
- \displaystyle\frac{|\nabla_g\varphi|^2}{2\varphi^2} g.
\label{eqschtensor}
\end{equation}

We will denote by $\varphi_{,x_k}$ and
$f_{i,x_k}$ the derivatives of $\varphi$ and $f_{i}$ with respect to $x_k$,
respectively, $\varphi_{,x_ix_j}$ and $f_{i,x_ix_j}$ the second order derivatives of $\varphi$ and $f$ with respect to $x_ix_j$, respectively. Since $g$ is the euclidean metric in ${\mathbb R}^n$, $n \geq 3$,
studying the problem (\ref{eqtensorcurvature21})
when $T = \displaystyle\sum_{i}f_{i}(x) dx^2_i$, $f_{i}(x)$ are smooth functions,
 is equivalent to studying the following system of equations

 \begin{equation}
\left\{
\begin{array}{l}
\displaystyle\frac{\varphi_{,x_ix_i}}{\varphi}-
\displaystyle\frac{|\nabla_g\varphi|^2}{2 \varphi^2}= \varphi^2 f_i, \quad
\forall \quad i: 1,...,n.\\
\varphi_{,x_ix_j} = 0, \quad \forall \quad i \ne j.
\vspace*{.1in}
\end{array}
\right.
\label{eqtensorcurvature3}
\end{equation}

From the second equation of (\ref{eqtensorcurvature3}) it follows that $\varphi$ can be
expressed as a sum of functions, each of which depends only on one of the variables $x_i$, so 
we will write $\displaystyle {\varphi (x) = \sum_{i=1}^n  \varphi_i(x_i)}$.

We will studing the system (\ref{eqtensorcurvature3}) with the additional condition that $3f_i(x) + f_j(x)\neq 0$, for all $x \in \mathbb{R}^n$ and all $i\neq j$.

\section{Main results}

We now  state our main results. We start with a lemma which proves to be very useful in the proofs to follows.

\begin{lemma}\label{lemma41}
 Let $\varphi(x_1,...,x_n)$ be a solution of (\ref{eqtensorcurvature3}). Then 

\begin{equation}
\frac{\varphi_{,x_j}}{\varphi} = - \frac{f_{i,x_j}}{3f_{i}+f_{j}}, \qquad \forall i \ne j
 \label{eqlemma41}
\end{equation}
\noindent and
\begin{equation}
 \displaystyle\frac{f_{k,x_j}}{3f_k+f_j} = \frac{f_{i,x_j}}{3f_i+f_j}.
\label{eqlemma411}
\end{equation}
\noindent for distinct $i,j,k$.

\end{lemma}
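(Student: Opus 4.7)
The plan is to extract both identities by differentiating the $i$-th equation of the system (\ref{eqtensorcurvature3}) with respect to some $x_j$ with $j\neq i$, and then using the other equations of the system to eliminate second-order derivatives of $\varphi$.

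More precisely, write the $i$-th equation as
\[
\frac{\varphi_{,x_ix_i}}{\varphi}-\frac{|\nabla_g\varphi|^2}{2\varphi^2}=\varphi^2 f_i,
\]
and differentiate both sides with respect to $x_j$ with $j\neq i$. On the right we obtain $2\varphi\,\varphi_{,x_j}f_i+\varphi^2 f_{i,x_j}$. On the left, the key simplification comes from the second equation of (\ref{eqtensorcurvature3}): the mixed partial $\varphi_{,x_ix_j}$ vanishes, and differentiating this once more also makes $\varphi_{,x_ix_ix_j}$ vanish. Thus the first term contributes only $-\varphi_{,x_ix_i}\varphi_{,x_j}/\varphi^2$. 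For the term $|\nabla_g\varphi|^2/(2\varphi^2)=\sum_k\varphi_{,x_k}^2/(2\varphi^2)$, the sum $\sum_k\varphi_{,x_k}\varphi_{,x_kx_j}$ collapses to the single term $\varphi_{,x_j}\varphi_{,x_jx_j}$.

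Collecting terms and factoring out $\varphi_{,x_j}/\varphi$ leads to
\[
\frac{\varphi_{,x_j}}{\varphi}\left[-\frac{\varphi_{,x_ix_i}}{\varphi}-\frac{\varphi_{,x_jx_j}}{\varphi}+\frac{|\nabla_g\varphi|^2}{\varphi^2}\right]=2\varphi\,\varphi_{,x_j}f_i+\varphi^2 f_{i,x_j}.
\]
Now I would substitute the $i$-th and $j$-th equations of (\ref{eqtensorcurvature3}), written as $\varphi_{,x_ix_i}/\varphi=\varphi^2 f_i+|\nabla_g\varphi|^2/(2\varphi^2)$ and analogously for $j$. The $|\nabla_g\varphi|^2/\varphi^2$ terms cancel exactly, leaving
\[
-\frac{\varphi_{,x_j}}{\varphi}\varphi^2(f_i+f_j)=2\varphi\,\varphi_{,x_j}f_i+\varphi^2 f_{i,x_j},
\]
and dividing by $\varphi^2$ and grouping the coefficients of $\varphi_{,x_j}/\varphi$ yields $(3f_i+f_j)\,\varphi_{,x_j}/\varphi=-f_{i,x_j}$. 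Using the standing hypothesis $3f_i+f_j\neq 0$, this is exactly (\ref{eqlemma41}).

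For the second identity, the observation is that the left-hand side of (\ref{eqlemma41}) depends only on $j$, not on the auxiliary index $i$. Hence for any two indices $i,k$ both distinct from $j$,
\[
-\frac{f_{i,x_j}}{3f_i+f_j}=\frac{\varphi_{,x_j}}{\varphi}=-\frac{f_{k,x_j}}{3f_k+f_j},
\]
which is (\ref{eqlemma411}). No serious obstacle is expected: the only subtlety is ensuring that the cross-term $\sum_k\varphi_{,x_k}\varphi_{,x_kx_j}$ really reduces to a single summand, and that the $|\nabla_g\varphi|^2$ contributions cancel; both follow transparently from $\varphi_{,x_ix_j}=0$ for $i\neq j$.
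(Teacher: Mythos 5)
Your proposal is correct and follows essentially the same route as the paper: differentiate the $i$-th equation of (\ref{eqtensorcurvature3}) with respect to $x_j$, use $\varphi_{,x_ix_j}=0$ to kill all mixed second and third derivatives, substitute the $i$-th and $j$-th equations to cancel the $|\nabla_g\varphi|^2$ contributions, and divide by $3f_i+f_j\neq 0$. The identity (\ref{eqlemma411}) then follows exactly as you state, by comparing the expressions for $\varphi_{,x_j}/\varphi$ obtained from two different auxiliary indices.
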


\begin{proof}
 From the first equation of (\ref{eqtensorcurvature3}) we obtain
\[
\varphi_{,x_ix_i}= \varphi^3 f_i + \displaystyle\frac{\displaystyle\sum_k (\varphi_{,x_k})^2}{2 \varphi}.
\]

Taking the derivative with respect to $x_j, j \neq i$ and using again the first equation of
(\ref{eqtensorcurvature3}) we obtain
\[
\begin{array}{lcl}
    0 & = & 3\varphi^2 \varphi_{,x_j} f_i + \varphi^3 f_{i,x_j} +
\displaystyle\frac{2 \varphi_{,x_j} \varphi_{,x_jx_j}
2\varphi - 2 \varphi_{,x_j} |\nabla_g\varphi|^2}{4\varphi^2}\\
     & = & 3\varphi^2\varphi_{,x_j} f_i + \varphi^3 f_{i,x_j} + \varphi_{,x_j} \displaystyle\frac{\varphi_{,x_jx_j}}{\varphi} -
\displaystyle\frac{ \varphi_{,x_j} |\nabla_g\varphi|^2}{ 2\varphi^2} \\
     & = &  3 \varphi^2 \varphi_{,x_j} f_i + \varphi^3 f_{i,x_j} + \varphi_{,x_j} \varphi^2 f_j +
\displaystyle\frac{\varphi_{,x_j} |\nabla_g\varphi|^2}{2\varphi^2} -
\displaystyle\frac{ \varphi_{,x_j} |\nabla_g\varphi|^2}{2\varphi^2} \\
     & = & \varphi^2 (\varphi_{,x_j} (3f_i+f_j) + \varphi f_{i,x_j})
  \end{array}
\]
Therefore, since $\varphi \neq 0$ we obtain
\begin{equation}
 \varphi_{,x_j} (3f_i+f_j) + \varphi f_{i,x_j} = 0
\label{eqlemma0}
\end{equation}
 and consequently (\ref{eqlemma41}). The equation (\ref{eqlemma411}) follows imediatelly from (\ref{eqlemma41}).
\end{proof}

The relationship between the conformal factor $\varphi$ and the functions $f_i$ that compose the tensor $T$
in this Lemma is fundamental to establish the results of this work.

The next theorem is our main result and contains necessary and suficient conditions for problem
(\ref{eqtensorcurvature1}) to have a solution.

\begin{theorem} \label{theorem41}
Let $(\mathbb{R}^n, g)$, $n\geq 3$, be the euclidean space, with coordinates
  $x_1,...,x_n$, and metric $g_{ij}=\delta_{ij}$. Consider a (0,4)-tensor $R=T\odot g$, where 
$T = \displaystyle\sum_{i=1}^n f_{i}(x)dx_i^{2} $, $f_i(x)$ are smooth functions such that
$3f_i(x) +f_j(x)\neq 0$ for all  $x \in \mathbb{R}^n$ and all $i\neq j$. Then there exists a positive
function $\varphi$ such that the metric $\bar{g} = \displaystyle\frac{1}{\varphi^2} g$ satisfies   ${\bar{R}} = R$ if and only if
the functions $f_i$ satisfy the following system of
differential equations
\begin{equation}
\left\{ \begin{array}{l}
\displaystyle\frac{f_{i,x_j}}{3f_i +f_j} =\displaystyle\frac{f_{k,x_j}}{3f_k + f_j},  \quad i \neq j,\ \ k\neq j, \\
\vspace*{.1in}
\left(\displaystyle\frac{f_{j,x_i}}{3f_j +f_i}\right)_{,x_k} = \left(\displaystyle\frac{f_{j,x_k}}{3f_j +f_k}\right)_{,x_i}, \quad i \neq j,\ \ k\neq j,
 \\
\vspace*{.1in}
\left( \dfrac{f_{i,x_j}}{3f_i +f_j}\right )_{,x_i} = \left(\dfrac{f_{j,x_i}}{3f_j +f_i}\right)_{,x_j}, \quad i \neq j,\\
\vspace*{.1in}
\dfrac{1}{2}\left(\dfrac{f_{j,x_i}}{3f_j +f_i}\right)^2 -
\left(\dfrac{f_{j,x_i}}{3f_j +f_i}\right)_{,x_i}
-  \dfrac{1}{2}\displaystyle\sum_{k \neq i}\left(\frac{f_{j,x_k}}{3f_j +f_k}\right)^2 = h_i, \quad i \neq j, \\
\vspace*{.1in}
\displaystyle\frac{f_{j,x_i}}{3 f_j+f_i}\, \frac{f_{i,x_j}}{3f_i+f_j} =
\left(\displaystyle\frac{f_{i,x_j}}{3f_i+ f_j}\right)_{,x_i},  \quad i \neq j,
\end{array}
\right.
\label{eqtheorem41}
\end{equation}
\noindent where $h_i(x)=f_i\ \ e^{-2\int \frac{f_{i,x_j}}{3f_i+f_j} dx_j +  \psi(\hat{x_j})}$, the function $\psi(\hat{x_j})$ does not depend on 
$x_j$, satisfies the following system of ($n-1$) differential equations
\begin{equation}
\psi_{,x_i}= \displaystyle\int \left(\displaystyle\frac{f_{i,x_j}}{3f_i +f_j}\right)_{,x_i} dx_j - \displaystyle\frac{f_{j,x_i}}{3f_j +f_i}, \quad \mbox{for} \quad i\neq j
\label{eqcit41}
\end{equation}
\noindent and, up to a multiplicative constant,
\[
\varphi(x)= \exp\left(-\displaystyle\int \frac{f_{i,x_j}}{3f_i +f_j} dx_j + \psi(\hat{x_j})\right).
\]
\end{theorem}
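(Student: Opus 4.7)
The argument splits into necessity and sufficiency. The unifying idea is that, by the preliminaries, the problem is equivalent to the system \eqref{eqtensorcurvature3}, and both directions are controlled by the logarithmic derivatives $F_j := -\varphi_{,x_j}/\varphi$, which Lemma \ref{lemma41} identifies with the expression $f_{i,x_j}/(3f_i+f_j)$ for any admissible $i\neq j$.

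\textbf{Necessity.} Assume $\varphi$ solves \eqref{eqtensorcurvature3}. Equation \eqref{eqlemma411} of Lemma \ref{lemma41} is exactly the first equation of \eqref{eqtheorem41}, so $F_j$ is unambiguously defined. Differentiating $\varphi_{,x_j} = -\varphi F_j$ with respect to $x_i$ gives $\varphi_{,x_ix_j} = \varphi(F_iF_j - \partial_{x_i}F_j)$, so the off-diagonal part of \eqref{eqtensorcurvature3} is exactly equation 5 of \eqref{eqtheorem41}. Equations 2 and 3 arise by imposing the symmetry $\partial_{x_a}\partial_{x_b}\log\varphi = \partial_{x_b}\partial_{x_a}\log\varphi$, using the Lemma's formula with different choices of auxiliary index (three distinct indices yield equation 2, the paired case yields equation 3). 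Finally, substituting $\varphi_{,x_ix_i}/\varphi = F_i^2 - \partial_{x_i}F_i$ and $|\nabla_g\varphi|^2/\varphi^2 = \sum_k F_k^2$ into the diagonal equation of \eqref{eqtensorcurvature3} produces equation 4 with $h_i = \varphi^2 f_i$; inverting Lemma \ref{lemma41} to recover $\varphi$ explicitly yields the stated closed form for $h_i$.

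\textbf{Sufficiency.} Assume \eqref{eqtheorem41} holds. By equation 1 we may define $F_j(x) := f_{i,x_j}/(3f_i+f_j)$ independently of $i\neq j$. Equations 2 and 3 together assert $\partial_{x_k}F_j = \partial_{x_j}F_k$ for every pair $j\neq k$, so the $1$-form $\omega = -\sum_j F_j\,dx_j$ is closed; since $\mathbb{R}^n$ is simply connected, the Poincar\'e lemma produces $u$ with $du=\omega$, and setting $\varphi = e^u$ recovers (up to a multiplicative constant) the formula in the theorem, with $\psi(\hat{x}_j)$ playing the role of the $x_j$-independent integration ``constant'' and \eqref{eqcit41} being the explicit system expressing $du=\omega$ once the integration in $x_j$ has been carried out. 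By construction, $\varphi_{,x_j}/\varphi = -F_j$. Then differentiating in $x_i$ ($i\neq j$) and applying equation 5 gives $\varphi_{,x_ix_j} = \varphi(F_iF_j - \partial_{x_i}F_j) = 0$. For the diagonal equation, one computes
\[
\frac{\varphi_{,x_ix_i}}{\varphi} - \frac{|\nabla_g\varphi|^2}{2\varphi^2} = \tfrac{1}{2}F_i^2 - \partial_{x_i}F_i - \tfrac{1}{2}\sum_{k\neq i}F_k^2,
\]
and equation 4 of \eqref{eqtheorem41}, together with the defining formula for $h_i$ chosen so as to match $\varphi^2 f_i$, identifies the right-hand side with $\varphi^2 f_i$, completing the verification of \eqref{eqtensorcurvature3}.

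\textbf{Main obstacle.} The argument is not conceptually deep, but it is book-keeping heavy. The principal difficulty lies in untangling what each of the five conditions in \eqref{eqtheorem41} encodes: equation 1 makes $F_j$ index-free, equations 2 and 3 jointly assert the closedness of $\omega$, equation 5 is the vanishing of $\varphi_{,x_ix_j}$, and equation 4 is the diagonal curvature equation in disguise, with $h_i$ forced by the integrated form of $\varphi$. Handling the consistency of the auxiliary system \eqref{eqcit41} for $\psi$ and ensuring that the exponent in the definition of $h_i$ truly matches $\varphi^2 f_i$ requires the most careful attention.
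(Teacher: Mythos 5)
Your proposal is correct and follows essentially the same route as the paper: identify $-\varphi_{,x_j}/\varphi$ with $f_{i,x_j}/(3f_i+f_j)$ via Lemma \ref{lemma41}, read off equation 1 as index-independence, equations 2 and 3 as the symmetry of the mixed partials of $\log\varphi$ (equivalently of $\psi$), equation 5 as the vanishing of $\varphi_{,x_ix_j}$, and equation 4 as the diagonal equation with $h_i=\varphi^2 f_i$. Your packaging of the integrability as closedness of $\omega=-\sum_j F_j\,dx_j$ plus the Poincar\'e lemma is a slightly cleaner rendering of the paper's explicit computation that $\ln\varphi$ is independent of the variable of integration, but it is the same argument.
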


\begin{proof}
 
 Suppose $\bar{g}=g/\varphi^2$  is a solution of  (\ref{eqtensorcurvature21}). Then, by the Lemma \ref{lemma41}, the first equation
of (\ref{eqtheorem41}) is satisfied and we obtain, for a fixed $j = 1, \ldots, n$, that
\[
\varphi(x) = \exp\left(-\displaystyle\int \frac{f_{i,x_j}}{3f_i +f_j}   dx_j + \psi(\hat{x_j})\right), \quad i \neq j,
\]
where the function $\psi(\hat{x_j})$ does not depend on $x_j$.

We will show that the expression of $\varphi$ is independent of the variable of integration and that the function $\psi$ is
well-defined.

Taking the derivative of $\varphi$ with respect to $ x_i$, $i \neq j$ we obtain
\[
\psi_{,x_i} = \displaystyle\int \left( \frac{f_{i,x_j}}{3f_i +f_j}\right )_{,x_i} dx_j +
\displaystyle\frac{\varphi_{,x_i}}{\varphi} = \displaystyle\int \left( \frac{f_{i,x_j}}{3f_i +f_j}\right )_{,x_i} dx_j
- \frac{f_{j,x_i}}{3f_j +f_i}.
\]
Now, taking the derivative of this expression with respect to $x_k$, $k \neq j$, we obtain
\[
\psi_{,x_ix_k} = \displaystyle\int \left( \frac{f_{i,x_j}}{3f_i +f_j}\right )_{,x_i x_k} dx_j
- \left(\frac{f_{j,x_i}}{3f_j +f_i}\right)_{,x_k}.
\]
Similarly, we obtain that
$ \psi_{,x_kx_i} = \displaystyle\int \left( \frac{f_{i,x_j}}{3f_i +f_j}\right )_{,x_k x_i} dx_j
- \left(\frac{f_{j,x_k}}{3f_j +f_k}\right)_{,x_i}$.

Thus, $\psi_{,x_ix_k} = \psi_{,x_kx_i}$ if and only if
\[
\left(\frac{f_{j,x_i}}{3f_j +f_i}\right)_{,x_k} =
 \left(\frac{f_{j,x_k}}{3f_j +f_k}\right)_{,x_i} .
\]
Therefore, the second equation in (\ref{eqtheorem41}) is satisfied.

Taking the derivative of $\psi_{,x_i}$ with respect to $x_j$, $i \neq j$, we obtain
\[
\psi_{,x_ix_j} =  \left( \frac{f_{i,x_j}}{3f_i +f_j}\right )_{,x_i} - \left(\frac{f_{j,x_i}}{3f_j +f_i}\right)_{,x_j}.
\]

Since $\psi_{,x_ix_j} = \psi_{,x_jx_i} = 0$, the third equation in (\ref{eqtheorem41}) is satisfied.

Using the equation (\ref{eqlemma41}) we have that, for a fixed $j = 1, \ldots, n$,

\[
\ln \varphi (x) = - \displaystyle\int \displaystyle\frac{f_{i,x_j}}{3f_i+f_j}dx_j + \psi (\hat{x_j}), \quad i\neq j,
\]
\noindent where the function $\psi (\hat{x_j})$ does not depend on $x_j$.

Integrating the equation (\ref{eqlemma41}) with respect to another variable $x_s$, for a fixed $s = 1, \ldots, n$, $s \neq j$, we obtain
\[
\ln \tilde{\varphi} (x) = - \displaystyle\int \displaystyle\frac{f_{i,x_s}}{3f_i+f_s}dx_s + \tilde{\psi} (\hat{x_s}), \quad i\neq s,
\]
\noindent where the function $\tilde{\psi} (\hat{x_s})$ does not depend on $x_s$.

Setting $A = \ln \varphi (x) - \ln \tilde{\varphi} (x)$ we will show that $A$ is constant.

Taking the derivative with respect to $x_s$ we obtain

\[
A_{,x_s}  =  - \displaystyle\int \left ( \frac{f_{i,x_j}}{3f_i+f_j}\right )_{,x_s} dx_j +
\psi_{,x_s} + \frac{f_{i,x_s}}{3f_i+f_s}.
\]

Using the equation (\ref{eqlemma411}) and that $\psi_{,x_s} = \displaystyle\int \left( \frac{f_{i,x_j}}{3f_i+f_j}\right)_{,x_s} dx_j
 - \displaystyle\frac{f_{i,x_s}}{3f_i+f_s}$ we conclude that $A_{,x_s} =0$. Similarly, taking the derivative of $A$ with respect to $x_j$, 
 using the equation (\ref{eqlemma411}) and that expression of $\tilde\psi_{,x_j}$ we conclude that $A_{,x_j} =0$.

Now, for $k \neq s$ and $k \neq j$, using the expressions of the derivatives of $\psi$ and $\tilde \psi$ and the 
equation (\ref{eqlemma411}) from Lemma \ref{lemma41} we obtain
\[
    \begin{array}{lcl}
    A_{,x_k} & = &  - \displaystyle\int \left ( \frac{f_{i,x_j}}{3f_i+f_j}\right )_{,x_k} dx_j + \psi_{,x_k}
+  \displaystyle\int \left ( \frac{f_{i,x_s}}{3f_i+f_s}\right )_{,x_k} dx_s - {\tilde \psi}_{,x_k} \\
     & = & - \displaystyle\int \left ( \frac{f_{i,x_j}}{3f_i+f_j}\right )_{,x_k} dx_j +
\displaystyle\int \left ( \frac{f_{i,x_j}}{3f_i+f_j}\right )_{,x_k} dx_j + \frac{\varphi_{,x_k}}{\varphi} \\
& & + \displaystyle\int \left ( \frac{f_{i,x_s}}{3f_i+f_s}\right )_{,x_k} dx_s
  - \displaystyle\int \left ( \frac{f_{i,x_s}}{3f_i+f_s}\right )_{,x_k} dx_s -
\displaystyle\frac{{\tilde\varphi}_{,x_k}}{\tilde\varphi}\\
& = & - \displaystyle\frac{f_{i,x_k}}{3f_i+f_k} + \displaystyle\frac{f_{i,x_k}}{3f_i+f_k} = 0.
\end{array}
\]
Thus, $\varphi$ is well-defined. Since $\varphi$ is a solution of (\ref{eqtensorcurvature21}) then $\varphi$ satisfies (\ref{eqtensorcurvature3}).
From Lemma \ref{lemma41},  for $i \neq j $, we have
$ \varphi_{,x_i} = - \varphi \displaystyle\frac{f_{j,x_i}}{3f_j+f_i}$.
Taking the derivative with respect to $x_i$, we obtain
\[
\displaystyle\frac{\varphi_{,x_ix_i}}{\varphi} = -\displaystyle\frac{\varphi_{,x_i}}{\varphi}
\displaystyle\frac{f_{j,x_i}}{3f_j+f_i}
 - \left(\displaystyle\frac{f_{j,x_i}}{3f_j+f_i}\right)_{,x_i} = \left(\displaystyle\frac{f_{j,x_i}}{3f_j+f_i}\right)^2 -
\left(\displaystyle\frac{f_{j,x_i}}{3f_j+f_i}\right)_{,x_i}
\]
Thus, using that $ |\nabla_g \varphi|^2 = \displaystyle\sum_{k=1}^{n} (\varphi_{,x_k})^2 =
\displaystyle\sum_{k\neq i}\varphi^2 \left(\displaystyle\frac{f_{j,x_k}}{3f_j+f_k}\right)^2 +
\left(\varphi_{,x_i}\right)^2$ and the expression above, the first equation in (\ref{eqtensorcurvature3})
 is equivalent to
\[
\left(\displaystyle\frac{f_{j,x_i}}{3f_j+f_i}\right)^2 - \left(\displaystyle\frac{f_{j,x_i}}{3f_j+f_i}\right)_{,x_i} -
\displaystyle\frac{1}{2} \displaystyle\sum_{k\neq i}\left(\displaystyle\frac{f_{j,x_k}}{3f_j+f_k}\right)^2 -
\displaystyle\frac{1}{2} \left(\displaystyle\frac{f_{j,x_i}}{3f_j+f_i}\right)^2 = \varphi^2 f_i .
\]
Simplifying this expression, we obtain
\[
 \displaystyle\frac{1}{2}
\left(\frac{f_{j,x_i}}{3f_j +f_i}\right)^2 -
 \left(\displaystyle\frac{f_{j,x_i}}{3f_j +f_i}\right)_{,x_i}
- \displaystyle \sum_{k \neq i}\left(\displaystyle\frac{f_{j,x_k}}{3f_j +f_k}\right)^2 = f_i 
e^{(-2\displaystyle\int \frac{f_i}{3f_i+f_j}  dx_j +  \psi(\hat{x_j}))}.
\]
which proves the fourth equality of (\ref{eqtheorem41}).

From Lemma \ref{lemma41}, we obtain for $i \neq j$ that
\[
\begin{array}{lcl}
 \varphi_{,x_jx_i} & = & - \varphi_{,x_i} \displaystyle\frac{f_{i,x_j}}{3f_i+f_j} -
\varphi\left(\displaystyle\frac{f_{i,x_j}}{3f_i+f_j}\right)_{, x_i}\\
& = & \varphi \left\{ \displaystyle\frac{f_{j,x_i}}{3f_j+f_i}
\displaystyle\frac{f_{i,x_j}}{3f_i+f_j} - \left(\displaystyle\frac{f_{i,x_j}}{3f_i+f_j}\right)_{,x_i} \right\} = 0.
\end{array}
\]
Since $\varphi \neq 0$ the other expression is equal zero,  which proves the fifth and last equality of (\ref{eqtheorem41}).
The converse is a straightforward computation.
\end{proof}

In order to provide explicit examples of metrics satisfying ${\bar R} = R$, we shall consider particular cases for $T$.

\begin{theorem}\label{theorem42}
 Let $(\mathbb{R}^n, g)$, $n\geq 3$, be the euclidean space, with coordinates
  $x_1,...,x_n$, and metric $g_{ij}=\delta_{ij}$.
Then there exists a metric
  $\bar{g} = \displaystyle\frac{1}{\varphi^2} g$
such that  ${\bar{R}} = R = f (g\odot g)$, where $f$ is a nonvanishing smooth function, if and only if
\begin{equation}
f(x) = \displaystyle\frac{-\lambda }{2 (\displaystyle\sum_{i=1}^{n}( a x_i^2 + b_i x_i) + c)^4 }
\label{eqTfg41}
\end{equation}
\noindent where $a,b_i , c$ are constants, $ \lambda = \displaystyle\sum_{i=1}^n b_i^2 - 4ac$ and
\begin{equation}
\varphi (x) = \displaystyle\sum_{i=1}^{n}( a x_i^2 + b_i x_i) + c.
\label{eqTfg42}
\end{equation}
\noindent Any such metric  $\bar{g}$  is unique up to homothety.
Moreover, we have:
\begin{enumerate}
\item  if $\lambda < 0$ then $\bar g$ is globally defined on $\mathbb{R}^n$;
\item  if $\lambda \geq 0$ then the set of singularities of $\bar g$ is
\begin{enumerate}
 \item the empty set if $\lambda =0$ and $a=0$;
 \item a point if $\lambda =0$ and $a \neq 0$;
 \item a hyperplane if $\lambda>0$ and $a=0$;
 \item an $(n-1)$-dimensional  sphere if $\lambda>0$ and $a\neq 0$.
\end{enumerate}
\end{enumerate}
\end{theorem}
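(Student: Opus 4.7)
The plan is to apply Theorem \ref{theorem41} with $f_i = f$ for every $i$, read off the shape of $\varphi$ from the resulting simplified version of (\ref{eqtensorcurvature3}), substitute back to recover $f$ in terms of $\varphi$, and finish with a short case analysis of the zero set of $\varphi$ for the singularity classification.

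Concretely, writing $R = (fg)\odot g$ makes $T = fg$, so all $f_i$ equal $f$ and the hypothesis $3f_i + f_j = 4f\neq 0$ of Theorem \ref{theorem41} holds. Since $\varphi_{,x_ix_j}=0$ for $i\neq j$, $\varphi$ splits as $\sum_i\varphi_i(x_i)$; and since the right-hand side $\varphi^2 f$ of the first equation of (\ref{eqtensorcurvature3}) is independent of $i$, so is $\varphi_{,x_ix_i}/\varphi = \varphi_i''(x_i)/\varphi(x)$. This forces each $\varphi_i''$ to equal a single constant $2a$, giving
\[
\varphi(x) = \sum_{i=1}^n(ax_i^2 + b_i x_i) + c,
\]
which is exactly (\ref{eqTfg42}). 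Substituting this back into the first equation of (\ref{eqtensorcurvature3}), using $|\nabla_g\varphi|^2 = \sum_i(2ax_i + b_i)^2$ and $\varphi_{,x_ix_i} = 2a$, a short computation collapses $4a\varphi - |\nabla_g\varphi|^2$ to the constant $4ac - \sum_i b_i^2 = -\lambda$, so $f = -\lambda/(2\varphi^4)$ as in (\ref{eqTfg41}); the converse is this same algebra read backwards.

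For uniqueness, rescaling $\varphi\mapsto k\varphi$ multiplies $\lambda$ by $k^2$ and $\varphi^4$ by $k^4$, so the induced $f$ is divided by $k^2$; hence, for the prescribed $f$, we must have $k = \pm 1$ and $\bar g = g/\varphi^2$ is determined up to homothety. For the singularity list, the case $a = 0$ is immediate: $\varphi$ is affine, vanishing on the empty set if all $b_i$ vanish (forced by $\lambda = 0$) and on a hyperplane otherwise. When $a\neq 0$, completing the square gives
\[
\varphi(x) = a\sum_{i=1}^n\Bigl(x_i+\frac{b_i}{2a}\Bigr)^{\!2} - \frac{\lambda}{4a},
\]
so $\{\varphi = 0\}$ is empty if $\lambda<0$, the single point $(-b_i/(2a))_i$ if $\lambda = 0$, and the $(n-1)$-sphere of radius $\sqrt{\lambda}/(2|a|)$ centered there if $\lambda>0$.

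The only step requiring a moment of thought is the deduction that the second derivatives $\varphi_i''$ all coincide with a common constant, since this is where the hypothesis "all $f_i$ equal" does real work: it cuts an a priori large family of solutions down to the $(n+2)$-parameter family parameterized by $(a, b_1, \dots, b_n, c)$. Everything afterward is a routine substitution or case check.
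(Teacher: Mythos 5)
Your proposal is correct and follows essentially the same route as the paper: both arguments hinge on observing that the first equation of (\ref{eqtensorcurvature3}) forces $\varphi_{,x_ix_i}$ to be independent of $i$, hence equal to a common constant $2a$, so that $\varphi$ is the quadratic (\ref{eqTfg42}), after which $f=-\lambda/(2\varphi^4)$ and the singularity classification follow by direct computation. The only (cosmetic) difference is that the paper first invokes Lemma \ref{lemma41} to get $\varphi^4 f$ constant and then determines $\varphi$, whereas you determine $\varphi$ first and recover $f$ by substitution; your explicit verification that $4a\varphi-|\nabla_g\varphi|^2=4ac-\sum_i b_i^2$ is exactly the computation the paper summarizes as ``calculating the expressions in the first equality.''
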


\begin{proof}
 Since $f_i=f_j$, for all $i,j$ from Lemma \ref{lemma41} we have
\begin{equation}
 \displaystyle\frac{\varphi_{,x_j}}{\varphi} = -\displaystyle\frac{f_{,x_j}}{4\,f} \quad \mbox{for all} \quad j.
\label{eqtfg4}
\end{equation}

Therefore, there exist a constant $\lambda$ such that $\varphi^4 f=-\displaystyle\frac{\lambda}{2}$ and soon
\linebreak $f = -\displaystyle\frac{\lambda}{2 \varphi^4}$.

From (\ref{eqtensorcurvature3}) we have that
\[
\varphi_{,x_ix_i} = f \varphi^3 + \frac{|\nabla_g\varphi|^2}{2\varphi}, \quad \mbox{for all} \quad  i = 1, ...,n.
\]
Then
\[
\varphi_{,x_ix_i} = \varphi_{,x_jx_j}
\]
\noindent for all $i,j$. Thus, for every $i = 1,...,n$,
$\varphi_i (x_i) = ax_i^2+b_i x_i + c_i$ and
\[\varphi(x) = \displaystyle\sum_{i=1}^n  \varphi_i(x_i)=
 \displaystyle\sum_{i=1}^n(ax_i^2+b_ix_i) + c.
\]
Using the above relation between $\varphi$ and $f$, we obtain that
\[
f(x) = \displaystyle\frac{-\lambda }{2 (\displaystyle\sum_{i=1}^{n}(a x_i^2 + b_i x_i) + c)^4 } .
\]

Calculating the expressions in the first equality in (\ref{eqtensorcurvature3}) we obtain that \linebreak
 $\lambda = \sum_i b_i^2 - 4ac$. Analyzing the expression of $\varphi$ we arrive to the conclusions concerning 
the domain of $\varphi$.
Particularly, if $\lambda < 0$ the function $\varphi$ does not vanish and the metric
$\bar g$ is globally defined on $\mathbb{R}^n$.
\end{proof}

\begin{remark} In this theorem, although we proved directly from the system (\ref{eqtensorcurvature3}),
the equations (\ref{eqtheorem41}) of the Theorem \ref{theorem41} are satisfied with $f_i=f$ $\forall i=1,...,n$ and $\varphi$
given by (\ref{eqTfg42}) is the same as in the Theorem \ref{theorem41} with $\psi(\hat{x_j})$ being a constant. This demonstrates that there exist examples of tensor in $\mathbb{R}^n$ that are solutions of the equations of Theorem \ref{theorem41}.
\end{remark}

Now, we present a result of non-existense of conformal metrics to a special kind of tensor $R$.

\begin{theorem}{\label{theorem43}}
  Let $(\mathbb{R}^n, g)$, $n \geq 3$, be the euclidean space, with coordinates
  $x_1,...,x_n$, and metric $g_{ij}=\delta_{ij}$.
Consider the (0,4)-tensor $R=T\odot g$, where $T = \displaystyle\sum_{i=1}^n f_i(x_i) dx_i^2$, with $f_i(x_i)$
 smooth functions that depend on only the variable $x_i$ such that $3f_i(x_i) + f_j(x_j) \neq 0$ for all $ i \neq j$.
Then there is no metric $\bar{g} = \displaystyle\frac{1}{\varphi^2} g$ such
that ${\bar R} = R$.
\end{theorem}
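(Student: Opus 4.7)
The proof is essentially immediate once one invokes Lemma \ref{lemma41}, so my plan is just to pivot on that lemma and derive a quick contradiction with the nondegeneracy hypothesis.

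The plan is to argue by contradiction: assume there exists $\varphi>0$ such that $\bar g = g/\varphi^2$ satisfies $\bar R = R$. Then $\varphi$ is a solution of system (\ref{eqtensorcurvature3}), so Lemma \ref{lemma41} applies and yields
\[
\frac{\varphi_{,x_j}}{\varphi} \;=\; -\,\frac{f_{i,x_j}}{3 f_i + f_j}, \qquad \forall\, i \neq j.
\]
Here is where the special form of $T$ kicks in: since each $f_i$ is a function of $x_i$ alone, one has $f_{i,x_j}=0$ whenever $i\neq j$. For each $j$ I would fix some $i\neq j$ (possible because $n\geq 3$, and in fact any $n\geq 2$ suffices) and conclude $\varphi_{,x_j}=0$. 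Doing this for every $j$ forces $\varphi$ to be a (positive) constant.

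Next I would substitute $\varphi\equiv \text{const}$ into the first equation of (\ref{eqtensorcurvature3}): all derivatives of $\varphi$ vanish, so the equation reduces to $\varphi^{2} f_i = 0$ for every $i$. Since $\varphi\neq 0$, this forces $f_i\equiv 0$ for all $i$. But then $3 f_i(x_i) + f_j(x_j) \equiv 0$ for every $i\neq j$, contradicting the standing hypothesis $3f_i(x_i)+f_j(x_j)\neq 0$. This contradiction finishes the argument.

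There is really no main obstacle; the only subtle point is making sure Lemma \ref{lemma41} is applicable, which it is precisely because $\varphi$ (assumed to exist) solves (\ref{eqtensorcurvature3}). The nondegeneracy condition $3f_i+f_j\neq 0$ is used in two places, implicitly in Lemma \ref{lemma41} (so that the quotient makes sense) and explicitly at the end to produce the contradiction from $f_i\equiv 0$.
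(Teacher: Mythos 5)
Your argument is correct and is essentially identical to the paper's own proof: both invoke Lemma \ref{lemma41} to get $\varphi_{,x_j}/\varphi = -f_{i,x_j}/(3f_i+f_j)=0$ for every $j$ (since $f_{i,x_j}=0$ for $i\neq j$), conclude $\varphi$ is constant, and then use the first equation of (\ref{eqtensorcurvature3}) to force $f_i\equiv 0$, contradicting $3f_i+f_j\neq 0$. No issues.
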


\begin{proof}
 Since $f_i$ do not depend on the variables $x_j, \ j \neq i$, from (\ref{eqlemma41}) we have
\begin{equation}
 \displaystyle\frac{\varphi_{,x_j}}{\varphi} = -\displaystyle\frac{f_{i,x_j}}{3f_i+f_j} = 0
\quad \mbox{for all} \quad j \neq i.
\end{equation}
and
\begin{equation}
 \displaystyle\frac{\varphi_{,x_i}}{\varphi} = -\displaystyle\frac{f_{j,x_i}}{3f_j+f_i} = 0
\quad \mbox{for all} \quad  i\neq j.
\end{equation}

Thus, $\varphi_{,x_k} = 0$ for all $k$, and therefore $\varphi$ is constant. Using (\ref{eqtensorcurvature3}) we conclude that $f_i=0$, for all $i=1,\ldots, n$, which contradicts $3f_i+f_j\neq 0$, for $i\neq j$ and therefore does not exist a metric $\bar g$ such that ${\bar R} = R$.
\end{proof}

In the particular case in which the components of the tensor $T$ depend only one variable,
we have the following result.

\begin{theorem}{\label{theorem44}}
 Let $(\mathbb{R}^n, g)$, $n \geq 3$, be the euclidean space, with coordinates
  $x_1,...,x_n$, and metric $g_{ij}=\delta_{ij}$. Consider a (0,4)-tensor $R=T\odot g$ where $T$ is a diagonal (0,2)-tensor given by 
$T = \displaystyle\sum_{i=1}^n f_i(x_k) dx_i^2$, with $f_i(x_k)$ smooth functions that depend only on $x_k$,
for some fixed $k, \ 1 \leq k \leq n$ such that $3f_i(x_k) + f_j(x_k) \neq 0$ for all $ i \neq j$.
There exists a metric   $\bar{g} = \displaystyle\frac{1}{\varphi^2} g$
such that  ${\bar{R}} = R$ if and only if all the functions $f_i$ for $i \neq k$ are equal to one another, say $f_i = f$ for $i \neq k$, and the functions
$f$ and $f_k$ satisfy the system
\begin{equation}
\left\{ \begin{array}{l}
\displaystyle\frac{1}{2} \left( \frac{f_{,x_k}}{3f +f_k}\right)^2 -
\left(\displaystyle\frac{f_{,x_k}}{3f +f_k}\right)_{,x_k} =
C^2 f_k \, v  \\
- \left(\displaystyle\frac{f_{,x_k}}{3f +f_k}\right)^2 = 2C^2 f \, v
\end{array}
\right.
\label{eqt1v41}
\end{equation}
\noindent where $v = v(x_k) = e^{-2 \int \frac{f_{,x_k}}{3f +f_k}  dx_k}$ and if $f_k$ and $f$ satisfy these conditions, then $\varphi$ depends only on $x_k$ and is given by
\begin{equation}
 \varphi(x_k) = C \exp (- \int \frac{f_{,x_k}}{3f +f_k} dx_k ),
\label{eqt1v42}
\end{equation}
\noindent where $C$ is a positive constant.
\end{theorem}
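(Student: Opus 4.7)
The plan is to reduce the problem to the PDE system (\ref{eqtensorcurvature3}) and exploit the single-variable dependence assumption together with $n \geq 3$ to collapse $\varphi$ to a function of $x_k$ alone. The argument then splits cleanly into necessity and sufficiency.

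For the necessity direction, I would first apply Lemma \ref{lemma41}. For each $j\neq k$, since $n\geq 3$, I can pick an index $i\notin\{j,k\}$; the hypothesis $f_i=f_i(x_k)$ gives $f_{i,x_j}=0$, so (\ref{eqlemma41}) forces $\varphi_{,x_j}/\varphi=0$. Hence $\varphi$ depends only on $x_k$. Substituting this into the diagonal equation $\varphi_{,x_ix_i}/\varphi-|\nabla_g\varphi|^2/(2\varphi^2)=\varphi^2 f_i$ of (\ref{eqtensorcurvature3}), for any $i\neq k$ I obtain
\[
-\frac{(\varphi_{,x_k})^2}{2\varphi^2}=\varphi^2 f_i,
\]
whose left-hand side does not depend on $i$. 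Therefore all the $f_i$ with $i\neq k$ must coincide. This forced equality is the key structural conclusion and, I expect, the only delicate step in the argument.

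Denoting the common value by $f$, the case $j=k$ of Lemma \ref{lemma41} gives $\varphi_{,x_k}/\varphi=-f_{,x_k}/(3f+f_k)$, which integrates to the formula (\ref{eqt1v42}) with the integration constant $C>0$; in particular $\varphi^{2}=C^{2}v$. The displayed equation above then transcribes directly into the second equation of (\ref{eqt1v41}). For the $i=k$ case of (\ref{eqtensorcurvature3}), I would differentiate the Lemma \ref{lemma41} relation to write $\varphi_{,x_kx_k}/\varphi=(\varphi_{,x_k}/\varphi)^2-(\varphi_{,x_k}/\varphi)_{,x_k}$ in terms of $f_{,x_k}/(3f+f_k)$; substituting yields exactly the first equation of (\ref{eqt1v41}). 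The off-diagonal equations $\varphi_{,x_ix_j}=0$ of (\ref{eqtensorcurvature3}) are trivially satisfied since $\varphi$ depends only on $x_k$.

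For the sufficiency direction, I would simply reverse the computation: given that $f_i=f$ for $i\neq k$ and that $(f,f_k)$ satisfies (\ref{eqt1v41}), define $\varphi$ by (\ref{eqt1v42}). By construction $\varphi=\varphi(x_k)$ and $\varphi_{,x_k}/\varphi=-f_{,x_k}/(3f+f_k)$, so all mixed second derivatives vanish and the second equation of (\ref{eqtensorcurvature3}) holds automatically. The two equations of (\ref{eqt1v41}) then translate back, respectively, to the $i=k$ and the $i\neq k$ cases of the first equation of (\ref{eqtensorcurvature3}), showing that $\varphi$ solves the whole system and hence that $\bar g=g/\varphi^{2}$ realizes $\bar R=R$. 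The bulk of the work is bookkeeping once the initial observation—that the single-variable hypothesis collapses $\varphi$ to a function of $x_k$ and then forces the $f_i$, $i\neq k$, to be equal—has been made.
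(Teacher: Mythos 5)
Your argument is correct and follows essentially the same route as the paper: Lemma \ref{lemma41} collapses $\varphi$ to a function of $x_k$, the diagonal equations of (\ref{eqtensorcurvature3}) with $i\neq k$ force the $f_i$, $i\neq k$, to coincide (the paper likewise writes $f_i=-|\nabla_g\varphi|^2/(2\varphi^4)$), and the two equations of (\ref{eqt1v41}) are the $i=k$ and $i\neq k$ cases of that system after substituting $\varphi_{,x_k}/\varphi=-f_{,x_k}/(3f+f_k)$ — the paper merely delegates this last transcription to Theorem \ref{theorem41}, whereas you carry it out explicitly. One small notational slip: the identity should be $\varphi_{,x_kx_k}/\varphi=\left(\varphi_{,x_k}/\varphi\right)^2+\left(\varphi_{,x_k}/\varphi\right)_{,x_k}$, which equals $h^2-h_{,x_k}$ with $h=f_{,x_k}/(3f+f_k)$, and that is the form you actually use, so the conclusion is unaffected.
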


\begin{proof}
 Since $f_i = f_i(x_k)$ for some fixed $k$, from the Lemma \ref{lemma41} we have that $\varphi_{,x_j} = 0$
for every $j \neq k$ and therefore $\varphi = \varphi(x_k)$.
Furthermore, from the equality in (\ref{eqtensorcurvature3}) we have that
$f_i = - \displaystyle\frac{|\nabla_g\varphi|^2}{2\varphi^4}$ for all $ i \neq k$ and,
therefore $f_i = f_j$, if $i \neq k$ and $j\neq k$. From this, the expression of $\varphi$ in
(\ref{eqt1v42}) and the system in (\ref{eqt1v41}) are consequences of the Theorem \ref{theorem41}.
\end{proof}

\begin{corollary}{\label{corollary45}}
 Let $(\mathbb{R}^n, g)$, $n \geq 3$, be the euclidean space, with coordinates
  $x_1,...,x_n$, and metric $g_{ij}=\delta_{ij}$. Consider the (0,2)-tensor
\[
T =  f_k(x_k) dx_k^2 + f(x_k) \displaystyle\sum_{i\neq k} dx_i^2,
\]
\noindent where $f_k(x_k)=\displaystyle\frac{h^2 -2h_{,x_k}}{2C^2} e^{2\int h(x_k)  dx_k}$, 
$f(x_k)= - \displaystyle\frac{h^2}{2c^2}e^{2\int h(x_k)  dx_k}$, with $h=h(x_k)$  a smooth function that depends only on $x_k$, for some fixed $k, \ 1 \leq k \leq n$.
Then there exist a conformal metric $\bar g = g/\varphi^2$ such that ${\bar R} = R$ and
\begin{equation}
 \varphi(x_k)= C \exp(- \int h(x_k) \ dx_k),
\label{eqt1v43}
\end{equation}
\noindent where $C$ is a positive constant.

If, in addition, $0 \leq \left \vert \displaystyle\int h(x_k) \ dx_k \right \vert \leq L$, for a finite constant $L$, then the
metric $\bar g$ is complete on $\mathbb{R}^n$.
\end{corollary}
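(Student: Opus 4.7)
The plan is to deduce the existence claim directly from Theorem \ref{theorem44} by verifying that the prescribed $f_k$ and $f$ satisfy the system (\ref{eqt1v41}) and produce the $\varphi$ stated in (\ref{eqt1v43}); the completeness assertion will then reduce to a two-sided bound on $\varphi$ and a comparison with the Euclidean metric.

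The main bookkeeping step is to identify the given smooth function $h(x_k)$ with the quantity $\dfrac{f_{,x_k}}{3f+f_k}$ that appears in Theorem \ref{theorem44}. A direct differentiation of $f = -\dfrac{h^2}{2C^2} e^{2\int h\,dx_k}$ brings out a factor $h^2 + h_{,x_k}$, and the same factor appears in $3f + f_k$ after the $3h^2$ coming from $3f$ combines with the $h^2 - 2h_{,x_k}$ inside $f_k$. Taking the ratio, the common exponential $e^{2\int h\,dx_k}$ and the factor $h^2 + h_{,x_k}$ cancel, leaving exactly $h$. Once this identification is established, the conformal factor furnished by Theorem \ref{theorem44} coincides with the expression (\ref{eqt1v43}), and the nondegeneracy assumption $3f + f_k \neq 0$ translates into $h^2 + h_{,x_k} \neq 0$ on $\mathbb{R}$, which is implicit in the hypotheses.

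With the identification in hand, one computes $v = e^{-2\int h\,dx_k}$ and observes that the exponential $e^{2\int h\,dx_k}$ packaged into both $f_k$ and $f$ cancels $v$ cleanly. Thus $C^2 f_k v$ collapses to $\tfrac{1}{2}h^2 - h_{,x_k}$ and $2C^2 f v$ collapses to $-h^2$, so both equations of (\ref{eqt1v41}) become tautologies in $h$. Theorem \ref{theorem44} then delivers the desired conformal metric $\bar g = g/\varphi^2$ satisfying $\bar R = R$.

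For the completeness statement, the hypothesis $\bigl|\int h(x_k)\,dx_k\bigr| \leq L$ yields the two-sided bound $C e^{-L} \leq \varphi(x_k) \leq C e^{L}$ on all of $\mathbb{R}^n$. Consequently $\bar g = \varphi^{-2} g$ is uniformly equivalent to the Euclidean metric, and for any piecewise smooth curve $\gamma$ its $\bar g$-length is at least $(Ce^{L})^{-1}$ times its Euclidean length. Since $(\mathbb{R}^n, g)$ is complete, any curve leaving every Euclidean ball has infinite Euclidean length and hence infinite $\bar g$-length, so by Hopf--Rinow the metric $\bar g$ is complete. I expect no real obstacle beyond the algebraic verification in paragraph two; the completeness argument is standard once the uniform bound on $\varphi$ is in place.
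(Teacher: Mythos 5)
Your proposal is correct and follows essentially the same route as the paper: the paper's proof is a one-line reduction to Theorem \ref{theorem44} via the identification $h = f_{,x_k}/(3f+f_k)$, which is exactly the computation you carry out, and your verification that both equations of (\ref{eqt1v41}) collapse to identities in $h$ is the substance behind the paper's ``trivially satisfied.'' You additionally supply the standard uniform-equivalence argument for completeness, which the paper leaves implicit.
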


\begin{proof}
 Follows immediately from Theorem \ref{theorem44} considering $h(x_k) = \displaystyle\frac{ f_{,x_k}}{3f + f_k}$. The
equalities in (\ref{eqt1v41}) are trivially satisfied and the expressions of $f$ and $f_k$ are exactly the components
of the tensor $T$.
\end{proof}

We can extend the Theorem \ref{theorem41} to locally conformally flat manifolds.
Consider $(M^n, g)$, a Riemannian manifold locally conformally flat. We may consider the problem (\ref{eqtensorcurvature1}) for neighborhood  $V \subset M $, with local coordinates $(x_1,x_2, ... , x_n)$ such that $g_{ij} = \delta_{ij}/F^2 $, where $F$ is a nonvanishing smooth function on $V$.

\begin{theorem}{\label{theorem46}}
 Let $(M^n, g)$, $n\geq 3$, be  Riemannian manifold, locally conformally flat. Let $V$ be an open subset of $M$
 with coordinates  $x=(x_1,x_2, ... , x_n)$ with  $g_{ij} = \delta_{ij}/F^2 $. Consider a (0,4)-tensor $R=T\odot g$, where $T$ is a diagonal (0,2)-tensor given by $T = \displaystyle\sum_{i=1}^nf_{i}(x)dx_i^{2} $, with $f_{i}$ are smooth functions such that 
$3f_i(x) + f_j(x) \neq 0$ for all $x \in V$ and all $ i \neq j$.
Then there exists a metric   $\bar{g} = \displaystyle\frac{1}{\phi^2} g$ such that  ${\bar{R}} = R$ if and only if
the functions $f_i$, $\varphi$ and $\psi$ are given as in Theorem \ref{theorem41} and $\phi = \displaystyle\frac{\varphi}{F}$.
\end{theorem}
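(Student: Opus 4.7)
The plan is to reduce Theorem \ref{theorem46} to Theorem \ref{theorem41} by exploiting the fact that, in the chart $V$, both $g$ and $\bar g$ are conformal to the Euclidean metric. Writing $g = \delta/F^2$ in the coordinates $x = (x_1, \ldots, x_n)$, where $\delta$ denotes the Euclidean metric on $V \subset \mathbb{R}^n$, the sought metric $\bar g = g/\phi^2$ equals $\delta/(F\phi)^2$. Setting $\varphi := F\phi$, we have $\bar g = \delta/\varphi^2$, so $\bar g$ is itself conformal to the Euclidean metric with conformal factor $1/\varphi^2$, and $\phi = \varphi/F$ is positive and smooth on $V$ if and only if $\varphi$ is.

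Next I would translate the condition $\bar R = R = T \odot g$ into a purely Euclidean prescribed-curvature condition. By bilinearity of the Kulkarni--Nomizu product over smooth scalar functions, $T \odot g = T \odot (\delta/F^2) = (T/F^2) \odot \delta$, so the problem becomes $\bar R = \tilde T \odot \delta$ on $V \subset \mathbb{R}^n$, where $\tilde T = \sum_i (f_i/F^2)\, dx_i^2$. The non-vanishing hypothesis $3f_i + f_j \neq 0$ transfers to the components $\tilde f_i = f_i/F^2$. Theorem \ref{theorem41} then applies directly: a positive $\varphi$ with $\bar g = \delta/\varphi^2$ satisfying $\bar R = \tilde T \odot \delta$ exists if and only if the $\tilde f_i$ satisfy the compatibility system (\ref{eqtheorem41}) together with the auxiliary system (\ref{eqcit41}), and in that case $\varphi$ (equivalently $\psi$) is given by the explicit formulas appearing in Theorem \ref{theorem41}. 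Recovering the original data via $\phi = \varphi/F$ yields the desired metric $\bar g = g/\phi^2$.

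The converse is obtained simply by running the substitution $\phi = \varphi/F$ in reverse and applying the converse part of Theorem \ref{theorem41}. The only points that need any care are the algebraic rewriting $T \odot g = (T/F^2) \odot \delta$ via bilinearity of the Kulkarni--Nomizu product, and checking that smoothness and positivity are preserved by the change $\phi \leftrightarrow \varphi = F\phi$ on $V$. Both are routine, so I do not foresee any serious obstacle: the proof is essentially a change-of-variable argument that reduces the locally conformally flat case to the Euclidean case already settled.
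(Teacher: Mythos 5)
Your overall strategy coincides with the paper's (whose entire proof is ``consider $\varphi=\phi F$ and apply Theorem \ref{theorem41}''): substitute $\varphi=F\phi$ so that $\bar g=g/\phi^{2}=\delta/\varphi^{2}$, where $\delta$ is the Euclidean metric of the chart, and reduce to the flat case. But your more careful bookkeeping of the Kulkarni--Nomizu product produces a factor that the stated theorem does not contain, and you never reconcile the two. As you correctly note, $T\odot g=(T/F^{2})\odot\delta$, so $\bar R=T\odot g$ is equivalent to the Euclidean problem of Theorem \ref{theorem41} for the rescaled components $\tilde f_i=f_i/F^{2}$; hence your argument yields solvability conditions (\ref{eqtheorem41})--(\ref{eqcit41}) imposed on the $\tilde f_i$, with $\varphi$ built from the $\tilde f_i$. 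The theorem you were asked to prove asserts that the \emph{original} $f_i$ satisfy (\ref{eqtheorem41}) and that $\varphi$ is built from the $f_i$. Since system (\ref{eqtheorem41}) is not invariant under $f_i\mapsto f_i/F^{2}$ for a general nonvanishing $F$, these are genuinely different conditions. Your final sentence, ``recovering the original data via $\phi=\varphi/F$ yields the desired metric,'' silently identifies the two, and that is where the proof of the statement as written fails to close.

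The discrepancy is not an artifact of your computation, and you should flag it rather than paper over it. The paper applies the Euclidean criterion to the unrescaled $f_i$ (and the hyperbolic example after the theorem is computed that way: its data satisfy Corollary \ref{corollary45} with the original $f_i$ and $\varphi=x_ne^{-x_n^{2}}$). But that recipe gives $A_{\bar g}=\varphi^{2}T$, hence $\bar R=A_{\bar g}\odot(\delta/\varphi^{2})=T\odot\delta=(F^{2}T)\odot g$, which is not $T\odot g$ unless $F\equiv 1$. So either the prescribed tensor should be $T\odot\delta$ (in which case the conditions on the original $f_i$ are the right ones and no rescaling is needed), or the tensor is $T\odot g$ as stated and the conditions must be imposed on $f_i/F^{2}$ as in your reduction; a correct write-up has to commit to one normalization, and yours currently asserts the first in its conclusion while proving the second. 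A minor further point: Theorem \ref{theorem41} is stated on all of $\mathbb{R}^{n}$, so to invoke it on the chart $V$ you should remark that its proof is local, with the integrations defining $\varphi$ and $\psi$ carried out on a connected, simply connected subdomain of $V$.
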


\begin{proof}
 We consider $\varphi = \phi F$ and apply Theorem \ref{theorem41}.
\end{proof}

\begin{remark}
In a similar fashion, we can extend the Theorem \ref{theorem44} for locally
conformally flat manifolds.
\end{remark}

As an application of the Theorem \ref{theorem46} we show that given a certain (0,4)-tensor $R$ in $\mathbb{R}_+^n$ there exist a metric $\bar{g}$, conformal to the metric of the hyperbolic space whose Riemannian curvature tensor is $R$.

\begin{example} Let $\mathbb{H}^n = (\mathbb{R}_+^n, g)$ the hyperbolic space, where $g = \displaystyle\frac{1}{x_n^2} g_0$, with 
$(g_0)_{ij} = \delta_{ij}$ the euclidean metric and $\mathbb{R}_+^n = \{x=(x_1,x_2, ... , x_n) \in \mathbb{R}^n / x_n > 0\}$.

Given the (0,4)-tensor $R=T\odot g$, where $T$ is the diagonal (0,2)-tensor
\[
 T = - \displaystyle\frac{(2x_n^2 - 1)^2}{2x_n^4} e^{2x_n^2} \sum_{i\neq n} dx_i^2 +
\displaystyle\frac{4x_n^4 - 8 x_n^2 -1}{2x_n^4} e^{2x_n^2} dx_n^2
\]
defined in $\mathbb{R}_+^n$, Theorem \ref{theorem46} garantees the existence of a metric $\bar{g} = \displaystyle\frac{1}{\phi^2} g$, where $ \phi(x) = \displaystyle\frac{\varphi(x)}{F(x)} =  e^{-x_n^2}$ such that $\bar{R} = R$.

Moreover, since $\phi(x)$ is bounded, $(\mathbb{R}_+^n, \bar{g})$ is a complete Riemannian manifold, conformal to the hyperbolic space.

The scalar curvature of $(\mathbb{R}_+^n, \bar{g})$ is not constant and given by
\[
\bar{K} = (n-1)e^{-2 x_n^2}(4(2-n)x_n^4 + 4(n-3) x_n^2 -n)
\]
\noindent and the Ricci tensor of the metric $\bar{g}$ is
\[
Ric_{\bar{g}} = \displaystyle\frac{4(2-n)x_n^4 + 2(2n-5) x_n^2 +1 -n}{x_n^2} \sum_{i \neq n}dx_ i^2
 + (n-1)\displaystyle\frac{4x_n^4 - 4 x_n^2 - 1}{x_n^2} dx_n^2.
\]

Likewise, $(\mathbb{R}_+^n, \bar{g})$ does not have constant sectional curvature, given by
\[
K\left(\displaystyle\frac{\partial}{\partial x_i},\displaystyle\frac{\partial}{\partial x_j}\right)
 =-(1-2x_n^2)^2 e^{-2 x_n^2} \leq 0
\]
\noindent if $i,j \neq n$ and
\[
K\left(\displaystyle\frac{\partial}{\partial x_i},\displaystyle\frac{\partial}{\partial x_n}\right)
=2x_n^2(2x_n^2 -3)   e^{-2 x_n^2} .
\]
\end{example}

\begin{example} Corollary \ref{corollary45} also provide examples of
complete, conformally flat manifolds with prescribed Riemannian curvature tensor and non-constant curvatures.

\begin{enumerate}
\item In the euclidean space $(\mathbb{R}^n, g)$, $n \geq 3$, consider the (0,4)-tensor $R=T\odot g$, where $T$ is a diagonal (0,2)-tensor
given by
\[
T= \left(\displaystyle\frac{\sinh^2 x_k - 2 \cosh x_k}{2C^2} e^{2\cosh x_k}\right) dx_k^2 -
 \displaystyle\frac{\sinh^2 x_k}{2C^2}e^{2\cosh x_k} \displaystyle\sum_{i \neq k} dx_i^2,
\]
\noindent where $C$ is a positive constant. 

Corollary \ref{corollary45} garantees the existence of a metric
$\bar{g} = \displaystyle\frac{1}{\varphi^2} g$, conformal to the euclidean metric, such that $\bar{R} = R = T \odot g$ is the Riemannian curvature tensor of the
metric $\bar{g}$. In particular, we have that
\[
\varphi(x_k) = C e^{-\cosh x_k}
\]
\noindent where $C$ is a positive constant. The manifold $(\mathbb{R}^n, \bar{g})$ is complete,
has negative scalar curvature given by
\[
\bar{K} = -(n-1) C e^{-\cosh x_k}(2\cosh^2 x_k +(n-2)\sinh^2 x_k)
\]
\noindent and negative Ricci curvature whose Ricci tensor is negative definite and given by
\[
Ric_{\bar{g}} = -(n-1)\cosh x_k dx_k^2 - (\cosh x_k + (n-2)\sinh^2 x_k)\displaystyle\sum_{i \neq k}dx_ i^2.
\]
Moreover,  $(\mathbb{R}^n, \bar{g})$ has nonpositive sectional curvature given by
\[
K\left(\displaystyle\frac{\partial}{\partial x_i},\displaystyle\frac{\partial}{\partial x_j}\right)
 =-C^2 \sinh^2 x_ k e^{-2\cosh x_k}
\]
\noindent if $i,j \neq k$ and
\[
K\left(\displaystyle\frac{\partial}{\partial x_i},\displaystyle\frac{\partial}{\partial x_k}\right)
=-C^2 \cosh x_k e^{-2 \cosh x_k}.
\]

\item In the euclidean space $(\mathbb{R}^n, g)$, $n \geq 3$, consider the (0,4)-tensor $R=T\odot g$, where $T$ is a diagonal (0,2)-tensor given by
\[
T= \displaystyle\frac{(4 x_k^2 - 2) }{C^2} dx_k^2 -
 \displaystyle\frac{2 x_k^2}{C^2} \displaystyle\sum_{i \neq k} dx_i^2,
\]
\noindent where $C$ is a positive constant. Corollary \ref{corollary45} garantees the existence of a metric
$\bar{g} = \displaystyle\frac{1}{\varphi^2} g$, conformal to the euclidean metric, such that $\bar{R} = R = T \odot g$ is the Riemannian curvature tensor of the
metric $\bar{g}$. In particular, we have that
\[
\varphi(x_k) = \displaystyle\frac{C}{1+ x_k^2}
\]
\noindent where $C$is a positive constant. The manifold $(\mathbb{R}^n, \bar{g})$ is complete,
has negative scalar curvature given by
\[
\bar{K} = -\displaystyle\frac{4(n-1)C^2}{(1+x_k^2)^2}(1+ (n-3)x_k^2)
\]
\noindent and the Ricci tensor of $\bar{g}$ is given by
\[
Ric_{\bar{g}} = \displaystyle\frac{2(n-1)(x_k^2 -1)}{(1+x_k^2)^2} dx_k^2 +
\displaystyle\frac{(10-4n)x_k^2 -2}{(1+ x_k^2)^2} \displaystyle\sum_{i \neq k}dx_ i^2.
\]
Moreover,  $(\mathbb{R}^n, \bar{g})$ has sectional curvature given by
\[
K\left(\displaystyle\frac{\partial}{\partial x_i},\displaystyle\frac{\partial}{\partial x_j}\right)
=-\displaystyle\frac{4 C^2 x_k^2}{(1+x_k^2)^4}
\]
\noindent if $i,j \neq k$ and
\[
K\left(\displaystyle\frac{\partial}{\partial x_i},\displaystyle\frac{\partial}{\partial x_k}\right)
=-\displaystyle\frac{2 C^2 (1- x_k^2)}{(1+x_k^2)^4}.
\]
\item In the euclidean space $(\mathbb{R}^n, g)$ consider the (0,4)-tensor $R=T\odot g$, where $T$ is a diagonal (0,2)-tensor
given by
\[
T= \displaystyle\frac{2(x_k^2-1)}{C^2} e^{2x_k^2} dx_k^2 -
 \displaystyle\frac{2x_k^2}{C^2}e^{2x_k^2} \displaystyle\sum_{i \neq k} dx_i^2,
\]
\noindent where $C$ is a positive constant. Corollary \ref{corollary45} garantees the existence of a metric
$\bar{g} = \displaystyle\frac{1}{\varphi^2} g$, conformal to the euclidean metric, such that $\bar{R} = R = T \odot g$ is the Riemannian curvature tensor of the
metric $\bar{g}$. In particular, we have that
\[
\varphi(x_k) = C e^{-x_k^2}
\]
\noindent where $C$ is a positive constant. The manifold $(\mathbb{R}^n, \bar{g})$ is complete, has negative scalar curvature given by
\[
\bar{K} = -4(n-1) C^2 e^{-2x_k^2}(1+(n-2)x_k^2)
\]
\noindent and negative Ricci curvature, whose Ricci tensor is given by
\[
Ric_{\bar{g}} = -2(n-1)dx_k^2 -2 (1-2(n-2)x_k^2)\displaystyle\sum_{i \neq k}dx_ i^2.
\]
The sectional curvature of $(\mathbb{R}^n, \bar{g})$ is nonpositive and given by the expressions
\[
K\left(\displaystyle\frac{\partial}{\partial x_i},\displaystyle\frac{\partial}{\partial x_j}\right)
=-4x_k^2C^2 e^{-2x_k^2}
\]
\noindent if $i,j \neq k$ and
\[
K\left(\displaystyle\frac{\partial}{\partial x_i},\displaystyle\frac{\partial}{\partial x_k}\right)
=-2C^2 e^{-2x_k^2}.
\]
We observe that although there are points where the tensor $R$ is zero, there still exists a 
complete metric such that the curvature tensor of this metric is the prescribed tensor $R$.  
\end{enumerate}
\end{example}

\bibliographystyle{amsplain}

\end{document}